\def\dc{d_c}
\def\ball{\mathcal B_r(\mu)}
\def\pl{\phi_{\lambda}}
\def\tp{\Pi(\{\mu\},\ball)} 
\def\probsp{S}
\def\ps{\probsp}
\def\intp{\int_{\probsp\times\probsp}}
\def\prodsp{\probsp\times\probsp}
\def\PPP{\mathcal P}
\def\QQQ{\mathcal Q}
\def\XXX{\mathcal X}
\def\YYY{\mathcal Y}
\def\ints{\int_{\probsp}}
\def\intx{\int_{\mathcal X}}
\def\intspace{1.5pt}
\newcommand{\ip}[1]{\intp #1\hspace{\intspace} d\pi(x,y)}
\newcommand{\is}[1]{\ints #1\hspace{\intspace} d\mu(x)}
\newcommand{\intpp}[1]{\intp #1\hspace{\intspace} d\pi^*(x,y)}
\newcommand{\intsn}[1]{\ints #1\hspace{1.5pt} d\nu(x)}
\def\liminfk{\liminf_{k\to\infty}}
\newcommand\image[2]{#1_{\#}#2}
\newcommand{\projfirst}[1]{\image{(\textrm{proj}_1)}{#1}}
\newcommand{\projsecond}[1]{\image{(\textrm{proj}_2)}{#1}}
\def\pistar{\pi^{*}}
\def\villani{MR2459454}  
\def\vp{v_P}
\def\vd{v_D} 
\def\flc{f(y)+\lambda c(x,y)}
\def\RR{\mathbb R}
\def\NN{\mathbb N}
\def\EE{\mathbb E}
\newtheorem{definition}{Definition}
\newtheorem{theorem}{Theorem}
\newtheorem{lemma}{Lemma}
\newtheorem{remark}{Remark}
\newtheorem{corollary}{Corollary}
\providecommand{\keywords}[1]
{
  \small	
  \textbf{\textit{Keywords---}} #1
}
\title{A minimax approach to duality for linear distributional sensitivity testing}
\author{Gusti van Zyl\\ Department of Mathematics and Applied Mathematics\\ University of Pretoria}
\begin{document}

\maketitle
\keywords{duality; optimization, minimax; sensitivity testing; distributional robustness}
\begin{abstract}
	We consider the problem of finding the maximum of $\EE_{\nu}[f(X)]$ where $\nu$ is 
	allowed to vary over all the probability measures on a Polish space $S$ for which $d_c(\mu,\nu)\leq \theta$, in which $d_c$ is an optimal transport distance, $f$ a real-valued function on $S$ satisfying some regularity, $\mu$ a ``baseline" measure and $\theta \geq 0$. Whereas some of the derivations of the dual version of this optimization problem rely on Fenchel duality, we impose compactness on $S$ to allow us to instead use K. Fan's minimax theorem, which does not require vector space structure. This allows one to avoid the use of vector spaces of measures, or dual variables other than the Lagrange multiplier. 
\end{abstract}

\section{Introduction}  Let $(\ps,d)$ be a Polish (i.e. complete and separable) metric space and $\mu$ a measure, called the baseline measure or distribution. Consider the optimization problem 
\begin{eqnarray}\label{eq:maximization}
	\text{maximize} & \intsn{f(x)} \\
	\text{subject to } & \nu\in \ball,
\end{eqnarray}
where $\ball$, termed the uncertainty set, is a set of measures with transportation cost $\leq r$ from $\mu$. The transportation cost  derives from a cost function $c:\ps\times\ps\to\RR_+$. Assume $f$ is sufficiently regular such that a maximum exists and denote that maximum by $\vp$. Versions of the duality result 
\begin{equation}\label{eq:max-dual}
	\vp=\vd:=\inf_{\lambda \geq 0}\Big\{\lambda r+\EE_{\mu}[\sup_{y\in S}\{f(y)-\lambda c(X,y)\}]\Big\}
\end{equation}
are obtained under varying assumptions by amongst others Esfahani and Kuhn \cite{MohajerinEsfahani2018}; Gao and Kleywegt \cite{gao2022distributionally}, Blanchet and Murthy \cite{MR3959085}; Bartl, Drapeau and Tangpi \cite{doi:10.1111/mafi.12203}; and Feng and Schl\"ogl \cite{feng2018model}. This duality reduces the feasible set from an infinite-dimensional to a finite-dimensional problem. If, as often happens, the inner supremum in Equation (\ref{eq:max-dual}) is analytically  tractable, then it becomes a one-dimensional optimization problem that in itself may be analytically solvable, as can be seen in several examples in the above-mentioned references. To give just two examples, option prices robust to changes of a certain magnitude in the risk-neutral distribution \cite{MR4067077} and robust ruin probabilities \cite{MR3959085} have been calculated using such duality results.

Esfahani and Kuhn \cite{MohajerinEsfahani2018} assume that the baseline measure $\mu$ is an empirical distribution, consider the Wasserstein distance for $p=1$ only and assume a special structure for $f$ as a maximum of a finite number of convex functions. Gao and Kleywegt use $\liminf$ and $\limsup$ inequalities related to the set of  minimizers of $\inf_{y\in\ps}\{\lambda c(x,y)-f(y)\},$ for the case $c(x,y)=d(x,y)^p$ with $p \geq 1$. Their proof of duality is based on among others results analogous to Moreau-Yosida regularization. Feng and Schl\"ogl sketch a proof using a Karush-Kuhn-Tucker argument. Blanchet and Murthy work with general cost functions $c(x,y)$ and base their result on Fenchel duality. Their dual variables are the Lagrange parameter $\lambda$ and a measurable function $\phi$. To use Fenchel duality, they consider a vector space of bounded continuous functions on $\prodsp$ as well as a vector space of signed finite Borel measures on $\prodsp$, equipped with the variation norm.

In this paper we prove (\ref{eq:max-dual}) using K. Fan's minimax theorem instead of Fenchel duality. One benefit of this is that vector space structure is replaced by weaker convex-concavity requirements. Thus the minimax theorem allows one to proceed without having to introduce vector space structure, signed measures or different topologies. We use only the Lagrange parameter $\lambda$ as dual variable. The price we pay for this relative simplicity is that we assume $S$ is compact. We hope that in spite of restriction, this type of relatively simple proof may stimulate generalizations and also contribute to making the topic of distributional sensitivity testing more accessible to researchers already familiar with minimax arguments. It could also be possible to extend this argument to non-compact spaces, similar to the compact-to-general extension step in \cite{MR3959085}.

For the interpretation of the problem, comparison of optimal transport distance with Kulback-Leibler divergence, and various applications, see the above-mentioned literature. In this paper we restrict ourselves to the minimax proof of Equation (\ref{eq:max-dual}).

The structure of this paper is as follows: we define notation and recall the formulation of the problem in terms of transport plans, derive topological preliminaries allowing the application of minimax to the Lagrangian, and then apply minimax.

\section{Formulation in terms of transport plans}

For notational convenience we consider the problem obtained by replacing the maximization in (\ref{eq:maximization}) by minimization. Since the maximization problem for $f$ can be solved by solving the minimization problem for $-f$, there is no loss in doing so. 

To define the ball of measures which will be the feasible set of the optimization problem, we review a few facts about optimal transport. Let $\XXX$ and $\YYY$ be Polish spaces. If $\mu$ is any Borel measure on $\XXX$, and $T:\XXX\to\YYY$ a Borel map, then $T_{\#}\mu$ will denote the image measure defined by $(\image{T}{\mu})(A)=\mu(T^{-1}(A))$ for Borel sets $A\subseteq \YYY$. If $\gamma$ is a probability measure on $\XXX\times\YYY$, its marginal, or projection to $\XXX$ is the measure $\projfirst{\gamma}$ where $\textrm{proj}_1$ is the coordinate projection $\XXX\times\YYY\to\XXX:(x,y)\mapsto x.$ Equivalently $(\projfirst{\gamma})(A)=\gamma(A\times\ps)$ for each Borel set $A\subseteq \XXX.$ The marginal to $\YYY$ namely $\projsecond{\gamma}$ is defined similarly. Recall \cite[Definition 1.1]{MR2459454} that a transport plan, or a coupling, between a measure $\mu$ on $\XXX$ and a measure $\nu$ on $\YYY$ is a measure $\pi$ on $\XXX\times \YYY$ such that $\projfirst{\pi}=\mu$ and $\projsecond{\pi}=\nu.$  We denote the set of all Borel probability measures on a Polish space $\XXX$ with $P(\XXX)$, which is topologized by weak convergence of probability measures. If $\XXX$ is Polish then $P(\XXX)$ is Polish. In particular, the weak convergence of measures in $P(\XXX)$ is metrizable. 
If $\PPP$ and $\QQQ$ are sets of measures satisfying $\PPP\subseteq P(\XXX)$ and $\QQQ\subseteq P(\YYY)$, then the set of all transport plans from any $\mu\in\PPP$ to any $\nu\in\QQQ$ will be denoted by $\Pi(\PPP,\QQQ).$ A {\it cost} function is any lower semicontinuous (l.s.c.) $c:\XXX\times\YYY\to\RR_+$. A typical choice is $c(x,y)=d(x,y)^p$ for some $p \geq 1$. The {\it optimal transport cost} between measures $\mu,\nu$ on $\XXX$ is defined by $\dc(\mu,\nu):=\inf\{ \int_{\XXX\times \XXX} c(x,y) d\pi(x,y):\ \pi\in \Pi(\{\mu\},\{\nu\})\}.$ 

Let $(\probsp,d)$ be a Polish space and $f:\ps\to\RR$ be l.s.c. For any $\mu\in P(\ps)$ and $r>0$, we define
\begin{equation*}
	\ball:=\{\nu\in P(\ps):\ \dc(\mu,\nu) \leq r\}.
\end{equation*}  

We consider the optimization problem 
\begin{subequations}\label{eq:optimization}
\begin{align}	
	&\text{minimize }  \intsn{f(x)} \\
	&\text{subject to }  \nu\in \ball.
\end{align}  
\end{subequations}
(By using the word ``minimize" we do not imply that the minimum is attained.)
The set of transport plans $X:=\tp$ inherits the weak topology of probability measures from $P(\prodsp).$	

By translating the condition $\nu\in\ball$ to a condition on the transport plan one obtains, similarly to arguments in \cite[Section 3.1]{feng2018model}, that Problem (\ref{eq:optimization}) is equivalent, in the sense that the infimum agrees, to the following problem over transport plans:
\begin{subequations}\label{eq:primal}
\begin{align}
	&\text{minimize}  \ip{f(y)}, \\
	&\text{subject to }  \pi\in X \label{eq:c1} \text{ and }\\
	&\ip{c(x,y)}\leq r.\label{eq:c2}
\end{align}
\end{subequations}
We will refer to this as the primal problem, and the associated infimum value as $\vp$. 

\section{Compactness of transport plans when $\ps$ is compact}
Now we assume that $\ps$ is also compact. Then $P(\ps)$ is compact, for example by the Prokhorov theorem or \cite[Remark 6.19]{\villani}). 
Since $\ball$ is a closed subset of $S$, it too is compact.

Our main tool will be K. Fan's  minimax theorem \cite{MR55678} as formulated by Borwein and Zhuang \cite{MR838482}. 

\begin{definition}\cite{MR838482}\label{def:cclike} Let $X$ and $Y$ be sets, not necessarily having vector space structure. A function $K:X\times Y\to\RR$ is said to be {\it convex-concave like} on $X\times Y$ if for all $t$, $0\leq t\leq 1$, we have
	\begin{enumerate}[(a)]
		\item for all $x_1,x_2\in X$ there exists $x_3\in X$ such that for all $y\in Y$
		\begin{equation*}
			K(x_3,y)\leq tK(x_1,y)+(1-t)K(x_2,y);\text{ and }
		\end{equation*}
		\item for all $y_1,y_2\in Y$ there exists $y_3\in Y$ such that for all $x\in X$
		\begin{equation*}
			K(x,y_3)\geq tK(x,y_1)+(1-t)K(x,y_2).
		\end{equation*}
	\end{enumerate}
\end{definition}

\begin{theorem}\cite[Theorem A]{MR838482}\label{thm:minimax} Suppose that $X$ and $Y$ are non-empty sets with $K$ convex-concave like on $X\times Y$. Suppose that $X$ is compact and $K(\cdot,y)$ is l.s.c. on $X$ for each $y$ in $Y$. Then
	\begin{equation*}
		\min_{x\in X}\sup_{y\in Y} K(x,y)=\sup_{y\in Y}\min_{x\in X} K(x,y).
	\end{equation*} 
\end{theorem}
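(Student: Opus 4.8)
The plan is to establish the nontrivial (``$\le$'') half of the identity; the reverse inequality $\sup_{y}\min_{x}K(x,y)\le\min_{x}\sup_{y}K(x,y)$ is the usual weak-duality fact, valid with no hypotheses, since $\min_{x'}K(x',y)\le K(x,y)\le\sup_{y'}K(x,y')$ for all $x,y$. Before anything else I would record that both extremal values are genuine attained minima: for fixed $y$ the map $K(\cdot,y)$ is l.s.c. on the compact set $X$, while $x\mapsto\sup_{y}K(x,y)$ is a pointwise supremum of l.s.c. functions and hence itself l.s.c. on $X$; in both cases a lower semicontinuous function on a compact set attains its infimum. Write $v:=\sup_{y}\min_{x}K(x,y)$.

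The core is to show $\min_{x}\sup_{y}K(x,y)\le v$, which I would argue by contradiction: assume the gap is strict and fix $\alpha$ with $v<\alpha<\min_{x}\sup_{y}K(x,y)$. The first task is to reduce the (possibly infinite) index set $Y$ to a finite one by compactness. For $y\in Y$ set $A_{y}:=\{x\in X:\ K(x,y)\le\alpha\}$; each $A_{y}$ is closed by lower semicontinuity of $K(\cdot,y)$, hence compact. Since $\sup_{y}K(x,y)>\alpha$ for every $x$, no single point lies in every $A_{y}$, so $\bigcap_{y}A_{y}=\emptyset$; the finite intersection property of the compact space $X$ then yields $y_{1},\dots,y_{n}$ with $\bigcap_{i=1}^{n}A_{y_{i}}=\emptyset$, i.e. $\max_{i}K(x,y_{i})>\alpha$ for all $x$.

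The heart of the argument is to convert this finite ``for every $x$ some coordinate is large'' statement into a single $\bar y$ witnessing $v\ge\alpha$. I would consider $\Phi(x):=(K(x,y_{1}),\dots,K(x,y_{n}))\in\RR^{n}$. Property (a) of convex-concave-likeness shows that $E:=\Phi(X)+\RR^{n}_{+}$ is a convex subset of $\RR^{n}$, and the emptiness of the intersection says exactly that $E$ is disjoint from the downward cone $\alpha\mathbf 1-\RR^{n}_{+}$. Separating these two disjoint convex sets in $\RR^{n}$ produces a nonzero functional $p$; upward-closedness of $E$ and downward-closedness of the cone force $p\ge 0$ componentwise, so after normalising $\sum_{i}p_{i}=1$ I obtain $\sum_{i}p_{i}K(x,y_{i})=p\cdot\Phi(x)\ge\alpha$ for all $x$. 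Finally, iterating property (b) over the convex weights $p_{1},\dots,p_{n}$ furnishes $\bar y\in Y$ with $K(x,\bar y)\ge\sum_{i}p_{i}K(x,y_{i})\ge\alpha$ for every $x$, whence $\min_{x}K(x,\bar y)\ge\alpha$ and so $v\ge\alpha$, contradicting $\alpha>v$.

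I expect the main obstacle to be precisely this conversion step: the hypotheses deliver only the pointwise \emph{maximum} bound $\max_{i}K(x,y_{i})>\alpha$, whereas what is needed is a uniform convex-combination bound realizable by one element of $Y$. Both halves of the definition are indispensable here and must cooperate: property (a) is what makes the image region $E$ convex, so that a separating hyperplane and hence the weights $p_{i}$ exist, while property (b) is what lets the resulting convex combination be dominated by a single $K(\cdot,\bar y)$. A minor technical point I would verify is the separation when $E$ need not be closed; using the strict gap coming from $\alpha<\min_{x}\sup_{y}K$ (so that the attained value $\min_{x}\max_{i}K(x,y_{i})$ strictly exceeds $\alpha$) keeps the two convex sets safely disjoint and renders the elementary non-strict separation theorem in $\RR^{n}$ sufficient.
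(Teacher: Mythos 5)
The paper contains no proof of this statement to compare against: Theorem \ref{thm:minimax} is imported verbatim as Theorem A of Borwein and Zhuang \cite{MR838482} (their formulation of Ky Fan's minimax theorem \cite{MR55678}), and the paper's own work consists only of verifying its hypotheses for the Lagrangian in Theorem \ref{thm:main}. Your proposal therefore does strictly more than the paper, and it is correct. Weak duality is immediate; both outer minima are attained since $K(\cdot,y)$ and $x\mapsto\sup_{y}K(x,y)$ are l.s.c. on the compact set $X$; assuming a strict gap and fixing $\alpha$ strictly between the two values, the sets $A_{y}=\{x\in X:K(x,y)\le\alpha\}$ are closed with empty total intersection, so the finite intersection property yields $y_{1},\dots,y_{n}$ with $\max_{i}K(x,y_{i})>\alpha$ for every $x$. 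The conversion step is also sound: property (a) of Definition \ref{def:cclike} gives, for any $x_{1},x_{2},t$, a point $x_{3}$ with $\Phi(x_{3})\le t\Phi(x_{1})+(1-t)\Phi(x_{2})$ componentwise, and the slack is absorbed by the $\RR^{n}_{+}$ term, so $E=\Phi(X)+\RR^{n}_{+}$ is convex; $E$ is disjoint from the cone $\alpha\mathbf{1}-\RR^{n}_{+}$; and in $\RR^{n}$ two disjoint nonempty convex sets can always be non-strictly separated with no closedness assumption, which is exactly the subtlety you flag and correctly resolve (non-strict separation suffices because the final contradiction $v\ge\alpha>v$ needs no strict inequality). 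Upward-closedness of $E$ forces $p\ge 0$, normalization makes the $p_{i}$ convex weights with $p\cdot\Phi(x)\ge\alpha$, and a finite induction on property (b) produces a single $\bar y$ dominating $\sum_{i}p_{i}K(\cdot,y_{i})$, giving $v\ge\alpha$, a contradiction. This is in essence the classical argument for Fan-type minimax theorems (compactness reduction to finitely many $y_{i}$, then separation in $\RR^{n}$ exploiting convex-concave-likeness), so it serves as a faithful, self-contained substitute for the citation rather than an alternative to any argument appearing in the paper.
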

In our application $X=\tp$ is a set of transport -- sometimes called transference -- plans, which by application of the following theorem is compact in $P(\prodsp)$.

\begin{lemma}\cite[Corollary 5.21]{\villani}\label{le:compactness}
Let $\XXX$ and $\YYY$ be Polish spaces, and let $c(x,y)$ be a real-valued continuous cost function, $\inf c>-\infty$. Let $\mathcal K$ and $\mathcal L$ be two compact sets of $P(\XXX)$ and $P(\YYY)$ respectively. Then the set of optimal transference plans $\pi$ whose marginals respectively belong to $\mathcal K$ and $\mathcal L$ is itself compact in $P(\XXX\times \YYY)$.
\end{lemma}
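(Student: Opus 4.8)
Although the statement is quoted verbatim from Villani, I sketch the proof I would reproduce. Write $\Pi(\mathcal K,\mathcal L)$ for the set of all couplings $\pi\in P(\XXX\times\YYY)$ with $\projfirst{\pi}\in\mathcal K$ and $\projsecond{\pi}\in\mathcal L$, and let $O\subseteq\Pi(\mathcal K,\mathcal L)$ denote the subset of plans that are optimal for their own marginal pair $(\projfirst{\pi},\projsecond{\pi})$. The plan is to show that (i) $\Pi(\mathcal K,\mathcal L)$ is compact in $P(\XXX\times\YYY)$, and (ii) $O$ is closed in $\Pi(\mathcal K,\mathcal L)$; since a closed subset of a compact set is compact, this yields the claim.

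For (i) I would establish tightness and closedness and then invoke Prokhorov. Tightness: as relatively compact families, $\mathcal K$ and $\mathcal L$ are tight, so for each $\eps>0$ one can choose compact $K_\eps\subseteq\XXX$ and $L_\eps\subseteq\YYY$ with $\mu(\XXX\setminus K_\eps)\le\eps$ for every $\mu\in\mathcal K$ and $\nu(\YYY\setminus L_\eps)\le\eps$ for every $\nu\in\mathcal L$. For any $\pi\in\Pi(\mathcal K,\mathcal L)$ the estimate
\[
\pi\big((\XXX\times\YYY)\setminus(K_\eps\times L_\eps)\big)\le\projfirst{\pi}(\XXX\setminus K_\eps)+\projsecond{\pi}(\YYY\setminus L_\eps)\le 2\eps,
\]
together with compactness of $K_\eps\times L_\eps$, shows $\Pi(\mathcal K,\mathcal L)$ is tight and hence relatively compact. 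Closedness: the marginal maps $\pi\mapsto\projfirst{\pi}$ and $\pi\mapsto\projsecond{\pi}$ are weakly continuous (test against functions of a single variable), and $\mathcal K,\mathcal L$ are closed, so any weak limit of members of $\Pi(\mathcal K,\mathcal L)$ again has its marginals in $\mathcal K$ and $\mathcal L$.

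The substance lies in (ii), the stability of optimality under weak convergence. Suppose $\pi_n\in O$ with $\pi_n\to\pi$ weakly, and put $\mu_n=\projfirst{\pi_n}\to\mu$, $\nu_n=\projsecond{\pi_n}\to\nu$. Since $c$ is continuous and bounded below, the functional $\pi\mapsto\int c\,d\pi$ is weakly l.s.c., so $\int c\,d\pi\le\liminf_n\int c\,d\pi_n$. To see that $\pi$ is optimal for $(\mu,\nu)$, I would fix an arbitrary competitor $\tilde\pi\in\Pi(\{\mu\},\{\nu\})$ and use the gluing lemma to manufacture couplings $\pi_n'\in\Pi(\{\mu_n\},\{\nu_n\})$, obtained by gluing small-displacement couplings of $\mu_n$ with $\mu$ and of $\nu$ with $\nu_n$ onto $\tilde\pi$, arranged so that $\pi_n'\to\tilde\pi$ weakly. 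Optimality of each $\pi_n$ gives $\int c\,d\pi_n\le\int c\,d\pi_n'$, so that $\int c\,d\pi\le\liminf_n\int c\,d\pi_n'$.

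The main obstacle is the remaining inequality $\liminf_n\int c\,d\pi_n'\le\int c\,d\tilde\pi$. This is an \emph{upper} bound on the cost along the approximating sequence, which with $c$ merely continuous (and possibly unbounded above) is not a consequence of weak convergence alone and requires an equi-integrability or domination control on the mass displaced by the gluing. This is exactly the technical heart of Villani's stability theorem; once it is in place one obtains $\int c\,d\pi\le\int c\,d\tilde\pi$ for every competitor $\tilde\pi$, hence $\pi\in O$, which closes $O$ and completes the argument.
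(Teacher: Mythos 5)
First, be aware that the paper offers no proof of this lemma: it is quoted verbatim from Villani (Corollary 5.21) and used as a black box, so your proposal has to be measured against Villani's argument rather than against anything internal to the paper. Your part (i) is correct and is precisely Villani's Lemma 4.4: compactness of $\mathcal K$ and $\mathcal L$ gives uniform tightness of the marginal families, hence tightness of $\Pi(\mathcal K,\mathcal L)$ via the estimate you wrote, and closedness follows from weak continuity of the marginal maps. The reduction of the lemma to ``the optimal plans form a closed subset of the compact set $\Pi(\mathcal K,\mathcal L)$'' is also the right skeleton.

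Part (ii), however, has a genuine gap, and it is not a gap that can be filled along the lines you indicate: the inequality $\liminf_n\int c\,d\pi_n'\le\int c\,d\tilde\pi$ is \emph{false} in general, so the gluing strategy cannot be completed. Take $\XXX=\YYY=\RR$, $c(x,y)=|x-y|$, $\mu_n=(1-\tfrac1n)\delta_0+\tfrac1n\delta_n$, $\nu_n=\delta_0$; then $\mathcal K=\{\mu_n:n\in\NN\}\cup\{\delta_0\}$ and $\mathcal L=\{\delta_0\}$ are compact, and $\Pi(\{\mu_n\},\{\nu_n\})$ is the singleton $\{\mu_n\otimes\delta_0\}$, whose unique element has cost $1$ for every $n$, while the unique competitor $\tilde\pi=\delta_{(0,0)}$ for the limiting marginals has cost $0$. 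So no choice of $\pi_n'$ can satisfy your inequality; equivalently, your strategy would establish upper semicontinuity of $(\mu,\nu)\mapsto\dc(\mu,\nu)$ under weak convergence, which fails for unbounded costs because vanishing mass can escape to infinity while carrying unit cost. Note that the conclusion of the lemma still holds in this example --- it is only your route to it that breaks. Accordingly, your remark that this step is ``the technical heart of Villani's stability theorem'' mischaracterizes the known proof: Villani does not fill this gap, he avoids it. His proof of the stability theorem (Theorem 5.20, from which Corollary 5.21 follows) runs through $c$-cyclical monotonicity: each optimal $\pi_n$ is concentrated on a $c$-cyclically monotone set; cyclical monotonicity is a pointwise condition on finitely many support points, and every point of $\mathrm{supp}\,\pi$ is a limit of points of $\mathrm{supp}\,\pi_n$, so the weak limit $\pi$ is again concentrated on a $c$-cyclically monotone set; and for a real-valued continuous cost bounded below, concentration on a $c$-cyclically monotone set implies optimality (the sufficiency direction of Villani's Theorem 5.10). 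That argument needs no comparison couplings and no equi-integrability control, and it is what you should substitute for your part (ii).
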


We will need the following lemma, which is a variation of \cite[Lemma 4.3]{MR2459454} suitable for our purpose.  
\begin{lemma}\label{le:lsc} Let $\mathcal X$ and $\mathcal Y$ be Polish spaces.
	(1) If $g$ is a nonnegative l.s.c. real-valued function on $\mathcal X$ then the mapping $P(\mathcal X)\to\RR:\nu\mapsto \int_{\mathcal X} g(x)\ d\nu(x)$ is l.s.c. 
	
	(2) If $g$ is a nonnegative l.s.c. real-valued function on $\mathcal X\times \mathcal Y$ then the mapping $P(\mathcal X\times \mathcal Y)\to\RR:\gamma\mapsto \int_{\mathcal X\times \mathcal Y} g(x,y)\ d\gamma(x,y)$ is l.s.c. 
\end{lemma}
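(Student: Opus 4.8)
The plan is to reduce the statement to two elementary facts: that integration against a probability measure is continuous for bounded continuous integrands, and that a pointwise supremum of continuous functions is l.s.c. The bridge between them is the approximation of a nonnegative l.s.c. function by an increasing sequence of bounded continuous functions.

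First I would record the approximation step on a metric space $(\mathcal X,d)$. For a nonnegative l.s.c. $g$, define the truncated inf-convolutions
\[
g_n(x):=\inf_{z\in\mathcal X}\big\{\min(g(z),n)+n\,d(x,z)\big\},\qquad n\in\NN.
\]
Each $g_n$ is nonnegative (the integrand is), bounded by $n$ (take $z=x$), and $n$-Lipschitz, hence continuous; the sequence is increasing in $n$; and $g_n\uparrow g$ pointwise. The only place lower semicontinuity of $g$ enters is the last claim: given $\alpha<g(x)$, l.s.c. supplies a radius $\delta$ with $g>\alpha$ on the $\delta$-ball about $x$, and for $n>\max(\alpha,\alpha/\delta)$ the infimum over the near region exceeds $\alpha$ (because $\min(g(z),n)>\alpha$ there) while the infimum over the far region is at least $n\delta>\alpha$, so $g_n(x)\ge\alpha$.

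Next, for each fixed $n$ the map $\nu\mapsto\intx g_n(x)\,d\nu(x)$ is continuous on $P(\mathcal X)$ for the weak topology, since $g_n$ is bounded and continuous and such functions are exactly the test functions defining weak convergence of probability measures. Applying the monotone convergence theorem pointwise in $\nu$, together with $g_n\uparrow g$, gives
\[
\intx g(x)\,d\nu(x)=\sup_{n\in\NN}\intx g_n(x)\,d\nu(x),
\]
with values in $[0,+\infty]$. Thus $\nu\mapsto\intx g\,d\nu$ is a pointwise supremum of continuous functions, and a supremum of any family of l.s.c. (in particular continuous) functions is l.s.c. This proves (1). For (2) I would simply note that the product $\mathcal X\times\mathcal Y$ of two Polish spaces is again Polish for any compatible product metric, so part (1) applies verbatim with $\mathcal X$ replaced by $\mathcal X\times\mathcal Y$ and $g(x)$ by $g(x,y)$.

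The only genuinely delicate point is the approximation step, namely checking that the $g_n$ are finite, bounded, Lipschitz, increasing, and converge up to $g$, with lower semicontinuity used precisely at the convergence claim above. Everything afterward is the definition of weak convergence, the monotone convergence theorem, and the stability of lower semicontinuity under suprema, all of which are routine.
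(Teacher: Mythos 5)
Your proof is correct and takes essentially the same route as the paper's: the paper invokes the theorem of Baire to produce bounded continuous functions $0\leq g_n\uparrow g$ and then applies monotone convergence, which is exactly the structure of your argument. The only differences are presentational --- you construct the $g_n$ explicitly by inf-convolution rather than citing Baire, and you conclude via ``a supremum of continuous functions is l.s.c.'' instead of the paper's double-limit $\liminf$ computation; since the weak topology on $P(\mathcal X)$ is metrizable these formulations are interchangeable.
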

\begin{proof}
	(1) Let $\nu_k\to\nu$ weakly. 
	Since $g$ is l.s.c. and nonnegative, we can use the theorem of Baire to obtain a sequence $(g_n)_{n\in\NN}$ of continuous real-valued functions such that $0\leq g_n\uparrow g$. By replacing $g_n$ with $\min\{g_n,n\}$ if necessary, we can assume $g_n$ is bounded.
	By monotone convergence,
	\begin{equation}
		\intx g\ d\nu
		=\lim_{n\to\infty}\intx g_n\ d\nu
		= \lim_{n\to\infty}\lim_{k\to\infty}\intx g_n\ d\nu_k
		\leq  \liminfk \intx g\ d\nu_k.
	\end{equation}
	The proof of (2) is similar. 
\end{proof}

\section{Application of minimax}
Define the Lagrangian \begin{eqnarray}
	L(\pi,\lambda)&:=&\ip{f(x,y)}+\lambda\left(\ip{c(x,y)}-r\right)
	\label{eq:lagrangian}\\
	&=& \ip{f(x,y)+\lambda c(x,y)}-\lambda r.
\end{eqnarray} As is typical in the Lagrangian approach, the term 
$\sup_{\lambda \geq 0}\lambda\left(\ip{c(x,y)}-r\right)$ is infinity if constraint (\ref{eq:c2}) is not satisfied, and $0$ if it is satisfied. It follows that $\vp=\min_{\pi\in X} \sup_{\lambda \geq 0} L(\pi,\lambda)$.

\begin{theorem}\label{thm:main}
	The conditions of the minimax Theorem \ref{thm:minimax} are satisfied for $X$ as defined above, $Y=\RR_+$ and the function $K$ chosen as the Lagrangian $L$ of Equation (\ref{eq:lagrangian}). In particular:
	\begin{enumerate}
		\item $X$ is compact Hausdorff in the weak topology
		\item  $L:X\times Y\to\RR$ is l.s.c. on $X$ for every $\lambda \in Y$. 
		\item $L$ is convex-concave like. 
	\end{enumerate} 
\end{theorem}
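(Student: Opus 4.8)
The plan is to verify the three listed conditions one at a time; the observation that organises the whole argument is that the Lagrangian $L$ of \eqref{eq:lagrangian} is \emph{separately affine} in its two arguments, so the convex--concave-like condition (3) will be essentially automatic and the real work lies in the topological conditions (1) and (2). For condition (1) I would \emph{not} appeal to Lemma \ref{le:compactness} directly, since that result requires a continuous cost and concerns only \emph{optimal} plans, whereas here $c$ is merely l.s.c. and $X$ is the set of \emph{all} couplings with the prescribed marginals. Instead, since $\ps$ is compact, $\ps\times\ps$ is compact Polish, so $P(\ps\times\ps)$ is compact and metrizable, hence Hausdorff, and every subspace is then Hausdorff. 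It remains to show $X$ is closed in $P(\ps\times\ps)$: the marginal map $\pi\mapsto(\projfirst{\pi},\projsecond{\pi})$ is weakly continuous into $P(\ps)\times P(\ps)$ (test against functions depending on one coordinate only), and $X$ is exactly the preimage of $\{\mu\}\times\ball$. Since $\{\mu\}$ is closed and $\ball$ is closed (indeed already known to be compact), $X$ is a closed subset of a compact space, hence compact.

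For condition (2), fix $\lambda\in Y=\RR_+$ and set $g_\lambda(x,y):=f(y)+\lambda c(x,y)$, so that $L(\pi,\lambda)=\int g_\lambda\,d\pi-\lambda r$. The integrand $g_\lambda$ is l.s.c. as a sum of l.s.c. functions (here $\lambda\geq0$ is used), but Lemma \ref{le:lsc} requires nonnegativity, which may fail because $f$ can be negative. Since $f$ is l.s.c. on the compact space $\ps$ it is bounded below, say $f\geq -M$; then $g_\lambda+M\geq0$ is l.s.c. and nonnegative, so by Lemma \ref{le:lsc}(2) the map $\pi\mapsto\int(g_\lambda+M)\,d\pi$ is l.s.c. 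Because every $\pi\in X$ is a probability measure, $\int(g_\lambda+M)\,d\pi=\int g_\lambda\,d\pi+M$, and subtracting the constants $M$ and $\lambda r$ preserves lower semicontinuity; hence $\pi\mapsto L(\pi,\lambda)$ is l.s.c.

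For condition (3), I would exploit that $L(\pi,\lambda)=\int f\,d\pi+\lambda\big(\int c\,d\pi-r\big)$ is affine in $\pi$ (the integral is linear in the measure) and affine in $\lambda$. For part (a) of Definition \ref{def:cclike}, take $\pi_3=t\pi_1+(1-t)\pi_2$; affinity gives the equality $L(\pi_3,\lambda)=tL(\pi_1,\lambda)+(1-t)L(\pi_2,\lambda)$, so the required inequality holds provided $\pi_3\in X$. For part (b), take $\lambda_3=t\lambda_1+(1-t)\lambda_2\in\RR_+$; again affinity yields equality, and $\RR_+$ is convex. The only nontrivial point is thus the convexity of $X$. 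The constraint $\projfirst{\pi}=\mu$ is affine, hence preserved under convex combinations, and $\projsecond{(t\pi_1+(1-t)\pi_2)}=t\projsecond{\pi_1}+(1-t)\projsecond{\pi_2}$, so it remains to see that $\ball$ is convex, i.e. that $\nu\mapsto\dc(\mu,\nu)$ is convex. This follows by gluing: if $\pi_i^\ast$ realises (or nearly realises) $\dc(\mu,\nu_i)$, then $t\pi_1^\ast+(1-t)\pi_2^\ast$ is a coupling of $\mu$ with $t\nu_1+(1-t)\nu_2$ of cost $t\dc(\mu,\nu_1)+(1-t)\dc(\mu,\nu_2)\leq r$, whence $t\nu_1+(1-t)\nu_2\in\ball$.

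I expect the main obstacle to be condition (2) rather than (3): once the separate affinity of $L$ is noticed, the convex--concave-like property reduces to convexity of the feasible set, whose only substantive ingredient is the standard convexity of the transport cost $\dc(\mu,\cdot)$. In condition (2), by contrast, one must be careful that Lemma \ref{le:lsc} applies only to nonnegative integrands, which is why the boundedness below of $f$ on the compact space $\ps$ and the shift-by-a-constant step (together with $\pi$ being a probability measure) are essential; it is also here that working with \emph{all} transport plans, rather than optimal ones, matters, since it both matches what the Lagrangian penalisation requires and keeps the compactness argument free of any continuity assumption on $c$.
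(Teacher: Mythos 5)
Your proof is correct, and where it diverges from the paper's argument it is actually tighter. The paper verifies the same three items but leans on different support: for (1) it invokes Lemma \ref{le:compactness} (compactness of \emph{optimal} transference plans for \emph{continuous} costs) to get compactness of $X$; for (2) it applies Lemma \ref{le:lsc} without further comment; and for (3) it takes $\pi_3=t\pi_1+(1-t)\pi_2$ and notes that $L$ preserves convex combinations, leaving the membership $\pi_3\in X$ implicit. You correctly flag that Lemma \ref{le:compactness} is a mismatch for $X$, which consists of \emph{all} couplings in $\Pi(\{\mu\},\ball)$ under a cost that is only l.s.c., and you replace it with the direct argument: the marginal map $\pi\mapsto(\projfirst{\pi},\projsecond{\pi})$ is weakly continuous, $X$ is the preimage of the closed set $\{\mu\}\times\ball$, and $P(\ps\times\ps)$ is compact since $\ps\times\ps$ is compact Polish; this is the cleaner route (in effect the tightness-plus-closedness argument behind the cited corollary, rather than the corollary itself, whose hypotheses are not met here). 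Your two gap-fills are also genuine rather than cosmetic: Lemma \ref{le:lsc} is stated only for \emph{nonnegative} integrands and $f$ need not be nonnegative, so the shift $f\geq -M$ (valid because an l.s.c. function on compact $\ps$ is bounded below, and $\pi$ is a probability measure so the shift contributes only the constant $M$) is needed before the lemma applies; and Definition \ref{def:cclike} demands $\pi_3\in X$, so convexity of $X$ --- hence convexity of $\ball$, obtained by forming convex combinations of (near-)optimal couplings --- must be checked, which the paper omits. In short: same skeleton as the paper, but your version supplies the compactness argument that actually applies and the two justifications the paper elides.
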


\begin{proof}
(1) The weak compactness of $X$ follows from Lemma \ref{le:compactness}. Since the weak topology on $P(\prodsp)$ is metrizable, $P(\prodsp)$, and hence $X$, is Hausdorff.

(2) Fix $\lambda \geq 0.$ By Equation (\ref{eq:lagrangian}) and Lemma \ref{le:lsc} it is clear that the mapping $x\mapsto L(x,\lambda)$ is l.s.c. 

(3) Although $X$ and $Y$ are not vector spaces and $L$ is not linear, $L$ does preserve convex combinations: if $\pi_1$ and $\pi_2$ are transport measures belonging to $X$ and $0\leq t\leq 1$ then it is clear that for any $\lambda\in Y$,
		\begin{equation}
			L(t\pi_1+(1-t)\pi_2,\lambda)=tL(\pi_1,\lambda)+(1-t)L(\pi_2,\lambda).
		\end{equation}
	Therefore $\pi_3:=t\pi_1+(1-t)\pi_2$ yields equality in the first part of Definition \ref{def:cclike}. Equality in the second part of Definition \ref{def:cclike} is similar. 
\end{proof}

This enables us to derive the dual. The fact that the optimization over measures subproblem is solved by a measure concentrated on
$\{(x,y)\in S\times S:y\in \arg \min_{z\in S} \{f(z)+\lambda c(x,z)\}\}$ is observed, subject to obvious alterations to translate between minimization and maximization problems, amongst others in \cite{MR3959085} and \cite{feng2018model}. 

\begin{corollary}\label{co:formula}
	$\vp=\vd$, where
	\begin{equation}\label{eq:D}
		\vd:=\sup_{\lambda \geq0}\Big\{ \is{\psi_{\lambda}(x)}-\lambda r\Big\},
	\end{equation}
and $\pl(x):=\min_{y\in S}\{f(y)+\lambda c(x,y)\}.$
\end{corollary}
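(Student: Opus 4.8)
The plan is to read $\vp=\vd$ off the minimax identity after evaluating the inner minimization in closed form. Because the Lagrangian penalization already encodes the transport-cost constraint, the primal value is unchanged if I minimize over the larger set $X'=\Pi(\{\mu\},P(S))$ of all couplings with first marginal $\mu$: indeed $\{\pi\in X:\ip{c(x,y)}\leq r\}=\{\pi:\projfirst{\pi}=\mu,\ \ip{c(x,y)}\leq r\}$, since $\ip{c(x,y)}\leq r$ already forces $\dc(\mu,\projsecond{\pi})\leq r$, so $\vp=\min_{\pi\in X'}\sup_{\lambda\geq0}L(\pi,\lambda)$. The set $X'$ is weakly compact and $L$ is affine and l.s.c.\ in $\pi$, so the hypotheses of Theorem~\ref{thm:minimax} hold for $X'$ exactly as in Theorem~\ref{thm:main}, and
\[
\vp=\min_{\pi\in X'}\sup_{\lambda\geq0}L(\pi,\lambda)=\sup_{\lambda\geq0}\min_{\pi\in X'}L(\pi,\lambda).
\]
It therefore suffices to show that, for each fixed $\lambda\geq0$, $\min_{\pi\in X'}L(\pi,\lambda)=\is{\pl(x)}-\lambda r$, which is the bracketed quantity defining $\vd$ (with $\psi_\lambda=\pl$); taking the supremum over $\lambda$ then finishes the proof.

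For the inner minimization I would prove two inequalities. The lower bound follows from $\flc\geq\pl(x)$ for every $y\in S$: for any $\pi$ with first marginal $\mu$,
\[
L(\pi,\lambda)=\ip{\big(\flc\big)}-\lambda r\geq\ip{\pl(x)}-\lambda r=\is{\pl(x)}-\lambda r.
\]
For equality I would concentrate the plan on the minimizing set: pick a measurable selection $\tlx\in\arg\min_{y\in S}\{\flc\}$ and set $\pi_\lambda=(\mathrm{id},T_\lambda)_{\#}\mu$, the law of $(x,\tlx)$ under $\mu$. Then $\pi_\lambda\in X'$ and
\[
\ip{\big(\flc\big)}=\is{\,f(\tlx)+\lambda c(x,\tlx)\,}=\is{\pl(x)},
\]
so $L(\pi_\lambda,\lambda)$ attains the lower bound and $\min_{\pi\in X'}L(\pi,\lambda)=\is{\pl(x)}-\lambda r$.

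The main obstacle is the admissibility of $\pi_\lambda$, and it is precisely what forces the passage from $X$ to $X'$: its second marginal $(T_\lambda)_{\#}\mu$ can satisfy $\dc\big(\mu,(T_\lambda)_{\#}\mu\big)>r$ for small $\lambda$, so $\pi_\lambda$ need not lie in the original ball-constrained set $X=\tp$, and the inner-minimum identity can genuinely fail there even though the final supremum is unaffected. Over $X'$ this difficulty disappears, and one need only check that $X'$ is a legitimate arena for Theorem~\ref{thm:minimax}: its first marginal is the fixed, hence tight, measure $\mu$ and its second marginals range over the uniformly tight set $P(S)$, while $\{\pi:\projfirst{\pi}=\mu\}$ is weakly closed, so $X'$ is weakly compact by Prokhorov; affinity in $\pi$ gives the convex-concave-like property and Lemma~\ref{le:lsc} gives lower semicontinuity. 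The remaining technical point is the existence of the selector $\tlx$, which holds because $S$ is compact and $y\mapsto\flc$ is l.s.c., so $\arg\min_{y}\{\flc\}$ is a nonempty compact set depending measurably on $x$; a standard measurable selection theorem then supplies $T_\lambda$, as in \cite{MR3959085} and \cite{feng2018model}.
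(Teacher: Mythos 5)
Your proof is correct, and although it follows the same minimax skeleton as the paper, it departs in one substantive point --- and that departure actually repairs a gap in the paper's own argument. The paper applies Theorem \ref{thm:minimax} to $X=\tp$ and then evaluates the inner minimum by exhibiting the deterministic coupling $\pistar=\image{(Id,T)}{\mu}$ as a minimizer over $X$; but it never verifies that $\pistar\in X$, i.e.\ that $\dc(\mu,\image{T}{\mu})\leq r$, and this can fail for small $\lambda$. Concretely, take $S=\{0,1\}$ with $c=d$, $\mu=\delta_0$, $r=\tfrac12$, $f(0)=0$, $f(1)=-10$: for $\lambda=0$ any selector sends $0$ to $1$, so $\image{T}{\mu}=\delta_1\notin\ball$, and indeed $\min_{\pi\in X}L(\pi,0)=-5$ while $\is{\phi_{0}(x)}=-10$, so the paper's identity for the inner minimum is false at that $\lambda$ (the two suprema over $\lambda$ still agree, but the paper's chain of inequalities only establishes $\min_{\pi\in X}L(\pi,\lambda)\geq\is{\pl(x)}-\lambda r$, i.e.\ the easy weak-duality direction $\vp\geq\vd$). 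You identify exactly this obstacle and sidestep it: over the enlarged set $X'=\Pi(\{\mu\},P(S))$ the deterministic coupling $\pi_\lambda$ is admissible for every $\lambda$, so the inner minimum is computed exactly, while your bridging identity $\vp=\min_{\pi\in X'}\sup_{\lambda\geq0}L(\pi,\lambda)$ is valid because $\ip{c(x,y)}\leq r$ forces $\dc(\mu,\projsecond{\pi})\leq r$, so the enlargement introduces no new feasible points. The price is having to re-verify the hypotheses of Theorem \ref{thm:minimax} on $X'$, and your checks are sound: $X'$ is weakly compact (it is a closed subset of the compact space $P(\prodsp)$, since the marginal map is weakly continuous), $L(\cdot,\lambda)$ is l.s.c.\ by Lemma \ref{le:lsc}, affinity of $L$ gives the convex-concave-like property, and measurable selection supplies $T_\lambda$ exactly as in the paper's Appendix A. In short, yours is not merely an alternative route: as written it is the complete argument, and the paper's proof becomes correct only after your modification (or after separately proving that a minimizing deterministic coupling lies in $X$ for the maximizing values of $\lambda$, which the paper does not do).
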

\begin{proof}
	By Theorem \ref{thm:main} we have 
	\begin{equation}\label{eq:minsup}
		\min_{\pi \in X}\sup_{\lambda \geq 0} L(\pi,\lambda)=\sup_{\lambda \geq 0 }\min_{\pi \in X} 
		L(\pi,\lambda).
	\end{equation}
	We have observed already that $\vp$ equals the LHS of Equation (\ref{eq:minsup}). For the RHS, fix $\lambda \geq 0$ and consider the inner minimization problem \begin{equation}\min_{\pi \in X} 
	L(\pi,\lambda)=\min_{\pi \in X}\Big\{\ip{f(x,y)+\lambda c(x,y)}\Big\}-\lambda r.\end{equation} 
	
	Since $X$ is compact and $L$ is l.s.c. on $X$ for every $\lambda \geq 0$, $L$ attains a minimum at say $\pi^{\star}\in X$. In fact we can construct the minimizer. Since $S$ is compact and the mapping $y\mapsto f(y)+\lambda c(x,y)$ is l.s.c., for each $x\in \ps$ the set of {\it integrand} minimizers $m(x):=\arg\min_{y\in S}\{f(y)+\lambda c(x,y)\}$ is non-empty. Let $T:S\to S$ be measurable and map $x$ to any element of $m(x).$ The existence of such a map follows from a measurable selection result in \cite{MR432846} and the details are given in Appendix A. This allows us to define the deterministic \cite[Definition 1.2]{MR2459454} coupling $\pistar:=\image{(Id,T)}{\mu}.$ It follows from this definition that the support  of $\pistar$ is a subset of $\{(x,T(x)):\ x\in S\}\subseteq S\times S.$

Consider an arbitrary transport plan $\pi\in X$. Combining the above-mentioned fact with the marginalization properties of the transport plans $\pistar$ and $\pi$, we get \begin{align*}
	&\intpp{\flc}  \\	
	= & \intpp{\min_{y\in S}\{\flc\}}\\
	= & \is{\min_{y\in S} \{\flc\}}\\
	= & \ip{ \min_{y\in S} \{\flc\}}\\
	\leq & \ip{\flc}.
	\end{align*}
	Since $\pi$ is arbitrary, $\pistar$ is a minimizer and 
	\begin{equation*}
		\min_{\pi\in X} L(\pi,\lambda)=L(\pistar,\lambda)=\ints \pl(x)\ d\mu(x)-\lambda r.
	\end{equation*}
Combining with Equation (\ref{eq:minsup}) we get Equation (\ref{eq:D}). 
\end{proof}

\begin{remark}
	A growth condition similar to that of \cite{MR3959085} or \cite{gao2022distributionally} will be needed in the extension of the result to non-compact $\ps$.
\end{remark}


\bibliographystyle{plain}
\bibliography{robust}

\section*{Appendix A: Measurable selection of minimizers in Corollary \ref{co:formula}}
Let $g(x,y)=f(y)+\lambda c(x,y)$ for $x,y\in S$. We need to show that a measurable function $T:S\to S$ that maps $x$ to $m(x):=\arg\min_{y\in S}\{f(y)+\lambda c(x,y)\}$ exists. Such problems have been studied in the statistics literature in connection with Bayes procedures and measurability of certain estimators. 

The result below is suitable to our purposes. Let $X,Y$ be metric spaces and let $D\subseteq X\times Y$. We use the notation $\mathrm{proj}(D)=\{x\in X:\text{there exists }y\in Y\text{ such that }(x,y)\in D\}$. We denote with $D_x\subseteq Y$ the section $\{y\in Y:\ (x,y)\in D\}$ and $F_x(y):=\inf_{y\in Y}F(x,y)$.

\begin{theorem}\cite[Corollary 1]{MR432846}
Let $X,Y$ be complete separable metric spaces and $F$ be a real-valued Borel measurable function defined on a Borel subset $D$ of $X\times Y$. 

Suppose that for each $x\in\mathrm{proj}(D)$, the section $D_x$ is $\sigma$-compact and $F(x,\cdot)$ is lower semi-continuous with respect to the relative topology on $D_x$. Then:
\begin{enumerate}[(i)]
	\item The sets 
	\begin{align*}
		G&=\mathrm{proj}(D),\\
		I&=\{x\in G:\ \text{for some }y\in D_x, F(x,y)=\inf F_x\},
	\end{align*}
are Borel.
\item For each $\epsilon>0$, there is a Borel measurable function 
$\phi_{\epsilon}$ satisfying, for $x\in G,$
\begin{align*}
f(x,\phi_{\epsilon}(x))&=\inf F_x, & \text{if }x\in I,\\
&\leq \epsilon+\inf F_x, &\text{if }x\notin I,\text{ and }\inf f_x\notin -\infty\\
&\leq -\epsilon^{-1}, &\text{if }x\notin I, \text{ and }\inf f_x=-\infty.
\end{align*}
\end{enumerate}
\end{theorem}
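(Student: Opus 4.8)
The plan is to reduce everything to one deep theorem from classical descriptive set theory, the Arsenin--Kunugui uniformization theorem: if $D\subseteq X\times Y$ is Borel and every section $D_x$ is $\sigma$-compact, then $\mathrm{proj}(D)$ is Borel and $D$ admits a Borel measurable uniformization, i.e. a Borel map defined on $\mathrm{proj}(D)$ whose graph lies in $D$. Applied to $D$ itself this already gives that $G=\mathrm{proj}(D)$ is Borel, which is half of (i). The one structural fact I would use repeatedly is that, in a metric space, a relatively open or relatively closed subset of a $\sigma$-compact set is again $\sigma$-compact: writing $K=\bigcup_n K_n$ with $K_n$ compact, $C\cap K_n$ is compact when $C$ is closed, while $U\cap K_n$ is an open subset of a compact (hence locally compact, second countable) metric space and so $\sigma$-compact when $U$ is open, and the union over $n$ gives the claim. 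Since $F(x,\cdot)$ is l.s.c. on the $\sigma$-compact section $D_x$, every set $\{y\in D_x: F(x,y)<t\}$ is relatively open and every set $\{y\in D_x: F(x,y)\le t\}$ is relatively closed, so both are $\sigma$-compact.

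Next I would show the value function $m(x):=\inf_{y\in D_x}F(x,y)$ is Borel on $G$. For each rational $t$ the set $\{(x,y)\in D: F(x,y)<t\}$ is Borel with $\sigma$-compact sections by the previous observation, so its projection $\{x\in G: m(x)<t\}$ is Borel by Arsenin--Kunugui; letting $t$ range over the rationals shows $m$ is Borel, and hence $\{m=-\infty\}$ and $\{m>-\infty\}$ are Borel. Now put $A:=\{(x,y)\in D: F(x,y)=m(x)\}$, which is Borel because $F$ and $m$ are, and whose section $A_x$ is exactly the relatively closed, hence $\sigma$-compact, set of minimizers of $F(x,\cdot)$. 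Applying the theorem to $A$ shows $I=\mathrm{proj}(A)$ is Borel, finishing (i), and produces a Borel uniformization $\phi_0\colon I\to Y$ with $F(x,\phi_0(x))=m(x)=\inf F_x$, which is precisely the selector required on $I$.

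For (ii) off $I$ I would run the same machine twice more. On the Borel set $\{x\in G\setminus I: m(x)>-\infty\}$, the set $B_\epsilon:=\{(x,y)\in D: F(x,y)<m(x)+\epsilon\}$ is Borel with relatively open, hence $\sigma$-compact, sections, each non-empty by the definition of the infimum, so its Borel uniformization gives $\phi_\epsilon$ with $F(x,\phi_\epsilon(x))\le \epsilon+\inf F_x$. On the Borel set $\{m=-\infty\}\subseteq G\setminus I$, the set $\{(x,y)\in D: F(x,y)<-\epsilon^{-1}\}$ is likewise Borel with non-empty $\sigma$-compact sections, and its uniformization gives $F(x,\phi_\epsilon(x))\le-\epsilon^{-1}$. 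Patching the three Borel selectors over the Borel partition $I$, $\{x\in G\setminus I: m(x)>-\infty\}$, $\{m=-\infty\}$ of $G$ yields the single Borel map $\phi_\epsilon$ claimed in (ii).

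The genuinely hard step is the Arsenin--Kunugui theorem itself; once it is in hand as a black box, the remainder is only the verification that the auxiliary sets $\{F<t\}$, $A$, $B_\epsilon$ and $\{F<-\epsilon^{-1}\}$ have $\sigma$-compact sections, together with routine measurability bookkeeping. Since the statement is quoted from \cite{MR432846} as a corollary, I would treat the uniformization theorem as the cited input and present only the reductions above; reproving it from the classical sieve and derivation arguments of descriptive set theory would otherwise be the principal burden.
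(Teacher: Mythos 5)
This statement carries no proof in the paper at all: it is imported verbatim (citation \cite{MR432846}) as the black box that Appendix~A applies, so there is no internal argument to compare against. Your reduction --- take the Arsenin--Kunugui uniformization theorem (Borel set with $\sigma$-compact sections has Borel projection and a Borel uniformization) as the one deep input, then run it on the auxiliary sets $\{F<t\}$, $A=\{F=m\}$, $B_\epsilon=\{F<m+\epsilon\}$ and $\{F<-\epsilon^{-1}\}$, and patch the three selectors over the Borel partition of $G$ --- is in substance the development in Brown and Purves themselves, whose Corollary~1 is deduced from exactly such a projection-and-selection theorem. So the architecture is sound and standard, and the measurability bookkeeping (Borelness of $m$ via $\{m<t\}=\mathrm{proj}\{F<t\}$ over rational $t$, Borelness of $A$ and $B_\epsilon$, non-emptiness of the relevant sections by definition of the infimum) is all correct.

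There is one genuine local error: you assert that lower semicontinuity of $F(x,\cdot)$ makes the strict sublevel set $\{y\in D_x: F(x,y)<t\}$ relatively \emph{open}. That is the upper semicontinuous case; for l.s.c.\ functions the open sets are the superlevel sets $\{F>t\}$, and strict sublevel sets need not be open (consider $F(y)=1$ for $y\neq 0$, $F(0)=0$, with $t=1$). Fortunately your conclusion survives by a one-line patch: $\{F(x,\cdot)<t\}=\bigcup_n\{F(x,\cdot)\le t-1/n\}$ is a countable union of relatively closed subsets of the $\sigma$-compact section $D_x$, hence $\sigma$-compact --- which is all that Arsenin--Kunugui needs. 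The same patch repairs the sections of $B_\epsilon$ (for fixed $x$, $m(x)+\epsilon$ is a constant, so the same decomposition applies) and of $\{F<-\epsilon^{-1}\}$. With that correction your lemma about relatively open subsets of $\sigma$-compact sets becomes unnecessary; only the trivial relatively-closed half is ever used. Everything downstream --- the Borelness of $I=\mathrm{proj}(A)$, the exact selector on $I$ (note $A_x=\{F(x,\cdot)\le m(x)\}$ is a closed sublevel set, so genuinely $\sigma$-compact), and the $\epsilon$-selectors off $I$ --- then goes through as you wrote it.
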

Applying this with $X=Y=S$, $F=g$, $\epsilon=1,\ D=\{(x,y)\in S\times S:\ x\in X,\ y\in m(x)\}$ we get a measurable function $T:=\phi_{\epsilon}:S\to S$ such that $F(x,T(x))=\inf F_x$.
The $\sigma$-compactness of $D_x$ follows from the fact that $g$ is l.s.c. in the second variable, so that $D_x=m(x)=F_x^{-1}\left((-\infty,\arg \min F_x]\right)$ is a closed subset of the compact space $S$, and hence compact. In our application $G=I=S$.

\end{document}